\theoremstyle{plain}
\newtheorem{theorem}{Theorem}[section]
\newtheorem{proposition}[theorem]{Proposition}
\newtheorem{lemma}[theorem]{Lemma}
\theoremstyle{definition}
\newtheorem{rmk}[theorem]{Remark}
\numberwithin{equation}{section}
\newcommand{\bs}{\backslash}
\newcommand{\cc}{\mathcal{C}}
\newcommand{\C}{\mathbb{C}}
\newcommand{\Hb}{\mathbb{H}}
\newcommand{\Oc}{\mathcal{O}}
\newcommand{\R}{\mathbb{R}}
\newcommand{\PP}{\mathbb{P}}
\newcommand{\bO}{\mathbb{O}}
\newcommand{\Sl}{\operatorname{SL}}
\newcommand{\SO}{\operatorname{SO}}
\newcommand{\Spin}{\operatorname{Spin}}
\newcommand{\SL}{\operatorname{SL}}
\newcommand{\SP}{\operatorname{Sp}}
\newcommand{\Sp}{\operatorname{Sp}}
\newcommand{\SU}{\operatorname{SU}}
\newcommand{\Ad}{\operatorname{Ad}}
\newcommand{\Int}{\operatorname{c}}
\newcommand{\ad}{\operatorname{ad}}
\newcommand{\diag}{\operatorname{diag}}
\def\af{\mathfrak{a}}
\def\gf{\mathfrak{g}}
\def\ff{\mathfrak{f}}
\def\hf{\mathfrak{h}}
\def\kf{\mathfrak{k}}
\def\lf{\mathfrak{l}}
\def\mf{\mathfrak{m}}
\def\nf{\mathfrak{n}}
\def\pf{\mathfrak{p}}
\def\qf{\mathfrak{q}}
\def\sf{\mathfrak{s}}
\def\so{\mathfrak{so}}
\def\sp{\mathfrak{sp}}
\def\su{\mathfrak{su}}
\def\uf{\mathfrak{u}}
\def\1{{\bf1}}
\def\oline{\overline}
\newcommand{\hcpt}{\hf'_c}
\newcommand{\hnon}{\hf'_n}
\begin{document}
\title[Real flag manifolds]
{Finite orbit decomposition of real flag manifolds}
\subjclass[2000]{22F30, 14M17, 22E46}
\keywords{Flag manifold, orbit decomposition, spherical subgroup}
\date{August 18, 2014}
\begin{abstract}
Let $G$ be a connected real semi-simple Lie group and $H$ a closed
connected subgroup. Let $P$ be a minimal parabolic subgroup of $G$.
It is shown that $H$ has an open orbit on
the flag manifold $G/P$
if and only if it has finitely many orbits on $G/P$.
This confirms a conjecture by T.~Matsuki.
\end{abstract}
\author[Kr\"otz]{Bernhard Kr\"otz}
\email{bkroetz@math.uni-paderborn.de}
\address{Universit\"at Paderborn\\Institut f\"ur Mathematik\\Warburger Stra\ss e 100\\
D-33098 Paderborn\\Germany}
\author[Schlichtkrull]{Henrik Schlichtkrull}
\email{schlicht@math.ku.dk}
\address{University of Copenhagen\\Department of Mathematics\\Universitetsparken 5\\
DK-2100 Copenhagen \O\\Denmark}
\thanks{The first author was supported by ERC Advanced Investigators Grant HARG 268105}
\maketitle

\section{Introduction}

Let $G$ be a connected real semi-simple Lie group and $P$
a minimal parabolic subgroup. Let $H <G$ be a closed 
and connected subgroup. The following theorem was 
conjectured by T.~Matsuki in~\cite{Mat2}.

\begin{theorem}\label{main thm}
If there exists an open $H$-orbit on 
the real flag variety $G/ P$ then
the double coset space $H\backslash G / P$
is finite. 
\end{theorem}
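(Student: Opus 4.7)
The plan is to proceed by induction on $\dim G$, with the central ingredient a \emph{local structure theorem} attached to the open $H$-orbit. Denote this orbit by $\Omega\subseteq G/P$. The first step is to produce a parabolic subgroup $Q\supseteq P$, with Langlands decomposition $Q=L\ltimes U_Q$, and a subgroup $L_H\subseteq L$ such that $HQ$ is open in $G$, $H\cap Q=L_H\ltimes U_Q$, and the induced action of $L_H$ on the flag manifold $L/(L\cap P)$ of the Levi $L$ itself admits an open orbit. For $H\neq G$ one should be able to arrange $Q\neq G$, so that $\dim L<\dim G$. This is the real-spherical analogue of the classical local structure theorem of Brion--Luna--Vust, and its extraction from the hypothesis that $H$ acts with an open orbit on $G/P$ is the conceptual core of the argument: heuristically, $\qf$ is read off from the restricted root spaces on which the infinitesimal stabilizer of a generic point of $\Omega$ fails to act trivially.

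Equipped with such a $Q$, I would analyze the $G$-equivariant projection $\pi\colon G/P\to G/Q$. The inclusion $P\subseteq Q$ forces $U_Q\subseteq P$, hence $Q/P\cong L/(L\cap P)$, a minimal-parabolic flag manifold of the reductive group $L$. Over the open orbit $HQ/Q\subseteq G/Q$ one has the identification $HQ/P\cong H\times^{H\cap Q}Q/P$, and since $U_Q\subseteq H\cap Q$ acts trivially on $Q/P$, the $H$-orbits contained in $HQ/P$ are in bijection with the $L_H$-orbits on $L/(L\cap P)$. The inductive hypothesis, applied to $L$ and its connected closed subgroup $L_H$ (which has an open orbit by the local structure theorem), supplies finitely many such orbits.

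What remains --- and where the genuine difficulty lies --- is the treatment of $H$-orbits in $G/P$ lying over the complement $G/Q\setminus HQ/Q$. The hypothesis ``$H$ has an open orbit'' is not automatically inherited by the closed $H$-invariant strata of $G/Q$, so the induction cannot be applied naively. My proposed strategy is to stratify this complement into finitely many locally closed $H$-invariant pieces, each of which --- after intersection with suitable Schubert-type cells or by an analogue of the Bruhat decomposition adapted to the pair $(H,P)$ --- can be identified with a flag variety of a proper reductive subgroup on which a conjugate-adapted subgroup of $H$ acts. The local structure theorem is then reapplied iteratively on each stratum, and a compound invariant such as the lexicographic pair $(\dim G,\dim\Omega)$ must be shown to strictly decrease at every step. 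Ensuring termination of this descent, and verifying that the hypothesis of an open orbit is genuinely preserved at each intermediate level, is the principal technical hurdle.
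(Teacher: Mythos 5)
There is a genuine gap, and you have located it yourself: the treatment of the $H$-orbits lying over $G/Q\setminus HQ/Q$ is not an argument but a declaration of intent. The part you do carry out (identifying the orbits in $HQ/P$ with $L_H$-orbits on $L/(L\cap P)$ and invoking induction there) only accounts for the orbits meeting the open set $HQ/P$, and the theorem's entire content is in the boundary. Note that the statement is \emph{false} when $P$ is not minimal (take $G=\Sl(3,\R)$, $P$ a maximal parabolic and $H$ its unipotent radical), so any correct proof must use minimality of $P$ in an essential way precisely when handling orbits away from the open one; your proposed stratification of the complement into "Schubert-type" pieces gives no indication of where that happens, nor why the descent invariant $(\dim G,\dim\Omega)$ would strictly decrease, nor why an open orbit would reappear on each stratum. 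As it stands, the proposal reduces the theorem to an unproved claim that is essentially equivalent to the theorem itself.

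For comparison, the paper sidesteps boundary strata altogether by a different induction. One writes $G=P_{\alpha_1}\cdots P_{\alpha_n}$ as a product of the rank-one parabolics $P_\alpha=Ps_\alpha P\cup P$ attached to simple roots, and proves by induction on $i$ that $H\bs Hx_0P_{\alpha_1}\cdots P_{\alpha_i}/P$ is finite, starting from the open orbit $Hx_0P$; after $n$ steps this exhausts all of $G$, so no separate analysis of closed strata is ever needed. The inductive step uses a Baire-category argument to find an open $H\times P$-orbit inside each $HgP_\alpha$, and then shows that openness of $HP$ in $HP_\alpha$ makes $(\lf_\alpha,\pi_\alpha(\pf_\alpha\cap\hf))$ a \emph{rank-one} spherical pair, reducing everything to Proposition \ref{th1}. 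The rank-one case is then settled by a case split: a non-reductive spherical $\hf$ lies in a minimal parabolic by Borel--Tits and gives at most four orbits via the Bruhat decomposition, while a reductive spherical $H$ is shown (using the classifications of maximal reductive and symmetric subalgebras) to have the same orbits as a symmetric overgroup $H'$, for which finiteness is known from Wolf and Matsuki. If you want to salvage your approach, the missing ingredient is precisely such a mechanism for propagating the open-orbit hypothesis from $\Omega$ out to all of $G/P$; the product decomposition into rank-one parabolics is the device that accomplishes this.
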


The purpose of this paper is to give a proof of Matsuki's conjecture. 
Note that the converse statement is easy: if $H\bs G / P$ is finite, 
then at least one double coset must be open as a consequence of the Baire category theorem. 
Further we remark that the theorem becomes false if the parabolic subgroup
$P$ is not minimal. A standard counterexample is $G=\Sl(3,\R)$ with $P$ a maximal parabolic and 
$H$ the unipotent part of $P$.  

In case $G$ is a complex algebraic reductive group, the minimal parabolic $P$
equals a Borel subgroup $B$ of $G$. 
A complex algebraic subgroup which has an open orbit on $G/B$
is called {\it spherical}. In this case
the finiteness of $H\bs G/B$ for a spherical subgroup $H$ is
a result of Brion \cite{Br} and
Vinberg \cite{Vinberg} with a simplified proof by Knop \cite{Knop}.
The spherical subgroups of a complex algebraic group have been classified
by Kr\"amer \cite{Kraemer} and Brion \cite{Brion}, but to our knowledge
there exists no such classification for $G$ real.

For $G$ real and $H$ a symmetric subgroup (that is, it is
the identity component of the set of fixed points for an involution),
it was shown by Wolf \cite{Wolf} that the conclusion (and hence the assumption) of 
Theorem \ref{main thm} is always fulfilled.

\par Our proof of Theorem \ref{main thm} proceeds in two steps. In the first step
we reduce the assertion to the case where the real rank of $G$ is one. The argument
for this step is essentially due to Matsuki (cf.~\cite{Mat2}).
For rank one groups we then treat the cases where $H$ is reductive or non-reductive 
in $G$ separately. In case $H$ is non-reductive, one shows that $H$ is contained 
in a conjugate of $P$ and that there are 2, 3 or 4  $H$-orbits on $G/P$.
For reductive $H$ we prove a refined statement: 

\begin{theorem}\label{main thm2} Suppose that $G$ is of real rank one
and that $H$ is a connected reductive 
subgroup with an open orbit on $G/P$. Then there is a symmetric subgroup 
$H'\supset H$ such that the $H'$-orbit decomposition of $G/P$ 
equals the $H$-orbit decomposition. 
\end{theorem}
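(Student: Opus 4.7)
The plan is to reduce to a $\theta$-stable configuration and then split into a compact and a non-compact case.

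Fix a Cartan involution $\theta$ of $G$ with fixed-point subgroup $K$, write $\gf=\kf\oplus\pf$, and choose an Iwasawa decomposition $G=KAN$ so that $P=MAN$ and $G/P=K/M$; in real rank one this is a sphere of dimension $\dim\nf$. By Mostow's theorem, after conjugating $H$ we may assume it is $\theta$-stable, and write $\hf=(\hf\cap\kf)\oplus(\hf\cap\pf)$. Since $\dim\af=1$, there are only two possibilities: either $\hf\cap\pf=0$, so $H\subset K$; or, after a further $K$-conjugation keeping $H$ $\theta$-stable, $\af\subset\hf\cap\pf$ and $A\subset H$.

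In the compact case $H\subset K$, every $H$-orbit on $K/M$ is the continuous image of the compact set $H$, hence compact; the open $H$-orbit is then both open and compact in the connected manifold $K/M$, so equals all of $K/M$, i.e.\ $H$ acts transitively on $G/P$. Taking $H'=K$, the fixed-point group of $\theta$, gives a symmetric subgroup of $G$ containing $H$ that also acts transitively, and the two trivial orbit decompositions coincide.

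In the non-compact case $A\subset H$. The natural candidate for $H'$ is the connected subgroup with Lie algebra $\hf+\zf_\kf(\hf)$, obtained by adjoining to $\hf$ its centralizer in $\kf$. Two things must be verified: (i) $H'$ is symmetric in $G$, and (ii) the compact group $Z_K(H)^0$ preserves each $H$-orbit on $G/P$, so that the $H$- and $H'$-orbit decompositions coincide. Both assertions would be checked by a case analysis indexed by the real rank-one simple Lie algebras $\so(n,1)$, $\su(n,1)$, $\sp(n,1)$, $\ff_{4(-20)}$ and their finitely many $\theta$-stable reductive subalgebras of real rank one. The key structural input for (ii) is that $Z_K(H)^0$ commutes with both $A$ and $H$, hence fixes the two $A$-fixed points $eP$ and $wP$ in the Bruhat decomposition $G/P=\{eP\}\sqcup \oline{N}eP$ (where $w$ is the nontrivial Weyl element), and permutes the one-dimensional $A$-orbits joining these ``poles'' within each $H$-orbit.

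The main obstacle is the non-compact case. The compact case is essentially free from the topological fact that a compact open subset of a connected space is the whole space, whereas the non-compact case demands a genuine case-by-case check, both that $\hf+\zf_\kf(\hf)$ is the fixed-point algebra of an involution commuting with $\theta$ and that enlarging $H$ to $H'$ by the compact centralizer $Z_K(H)^0$ does not merge distinct $H$-orbits on the sphere $G/P$.
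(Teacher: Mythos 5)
Your compact case ($H\subset K$) is correct: an orbit of a compact group is compact, and a compact open subset of the connected manifold $G/P$ is everything, so $H$ is transitive and $H'=K$ works. But the non-compact case, which is where all the content of the theorem lies, has a genuine gap, and in fact your proposed construction of $H'$ is wrong. You take $H'$ to be the group with Lie algebra $\hf+\zf_\kf(\hf)$ and assert (deferring to an unexecuted case analysis) that it is symmetric. Consider $\gf=\so(1,n)$ with $n=2k+m$ and $\hf=\so(1,m)\oplus\su(k)$, where $\su(k)\subset\so(2k)$ sits in the compact part; this is spherical (Remark \ref{exist ML}) and $\theta$-stable with $\af\subset\hf$. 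The centralizer of $\su(k)$ in $\so(2k)$ is the one-dimensional center of $\uf(k)$, so $\hf+\zf_\kf(\hf)=\so(1,m)\oplus\uf(k)$, which is \emph{not} a symmetric subalgebra of $\so(1,n)$ (by Berger's list, Lemma \ref{Berger list}, the only candidates are $\so(1,m)\oplus\so(n-m)$). The correct enlargement here is $\so(1,m)\oplus\so(2k)$, which is strictly larger than what adjoining the centralizer produces. Your point (ii) is likewise not established: knowing that $Z_K(H)^0$ fixes the two $A$-fixed points and permutes $A$-orbits does not show it preserves each $H$-orbit, and in any case the group that must be absorbed is $H'\cap M$ for the symmetric $H'$, not $Z_K(H)^0$.

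For comparison, the paper's route is: take $\hf'$ to be \emph{any} maximal proper reductive subalgebra containing $\hf$; a classification argument (Lemma \ref{maximal subalgs}, using Berger's and Chen's lists) shows that such an $\hf'$ is either symmetric or one of three explicit non-spherical exceptions, which are excluded since $\hf'\supset\hf$ is spherical. The coincidence of orbit decompositions is then Proposition \ref{prop}, which rests on Matsuki's description of the $H'$-orbits (all open or closed in rank one) together with the two structural facts of Lemma \ref{exist symmetric hf}: $H'=H(H'\cap M)$ for a suitable minimal parabolic, and the noncompact ideal $\hnon$ of $\hf'$ lies in $\hf$. These are exactly the inputs your sketch is missing; without them (or the case-by-case verification you postpone), the non-compact case is not proved.
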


This concludes the proof of Theorem \ref{main thm} since as mentioned
above, $H\bs G/P$ is finite for all symmetric subgroups of $G$.

Finally we remark that the conclusion of Theorem \ref{main thm2} is false in
higher real rank. For example, $H=\SL(2,\R)$ diagonally embedded in the triple product 
$G=\SL(2,\R)^3$ admits an open orbit in $G/P$ (see \cite{DKS}).
Let $P=P_1^3$, where $P_1$ is a parabolic subgroup of $\SL(2,\R)$, then
the $H$-orbit through the origin of $G/P$ is one-dimensional. On the other hand, the proper symmetric subgroups containing $H$ 
have the form $H'=\SL(2,\R)^2$, embedded by $(x,y)\mapsto(x,x,y)$ 
up to permutation, and for these groups the orbit through the 
origin is two-dimensional.

After we finished this paper, it was brought to our attention by T.~Kobayashi
that the subject was previously considered by F. Bien in \cite{Bien}, where an outline is given
for a proof of Theorem \ref{main thm}. Apart from the reduction suggested by Matsuki, 
there is however no overlap with the current approach.

\section{Reduction to the rank one case}

Let us call the pair $(G,H)$ {\it real spherical} provided there are 
open $H$-orbits on $G/P$.  This means that the corresponding infinitesimal 
pair $(\gf, \hf)$ of Lie algebras satisfies $\gf=\hf +\Ad(x)\pf$
for some $x\in G$. Since all our groups will be real, we will just say
{\it spherical}. As customary we denote Lie subgroups of $G$ 
by upper case roman letters and their Lie algebras by the corresponding lower 
case German letters. 

\par Let $(G,H)$ be a spherical pair. Matsuki remarked in \cite{Mat2}, p.~813, that Theorem \ref{main thm} holds true provided it 
is valid for all spherical pairs $(G,H)$ where $G$ is a
semisimple Lie group of real rank one. 
The purpose of this section is to provide a proof of this remark. We follow
closely the proof of Theorem 4 in \cite{Mat2}. 

We assume that the assertion of Theorem \ref{main thm}
holds for all rank one groups. Let $(G,H)$ be a spherical pair,
then there exists an open orbit in $G/P$, say $Hx_0P$.
 
We fix a Cartan 
decomposition $\gf=\kf\oplus\sf$,
with corresponding involution $\theta$, and a maximal abelian
subspace $\af\subset\sf$. We assume that $P\supset A$ and denote by
$\Pi\subset \af^*$ the set of simple roots 
attached to $P$. For $\alpha\in \Pi$ we define the parabolic subgroup
$P_{\alpha} := Ps_\alpha P \cup P$ where $s_\alpha\in G$ is a Weyl group 
representative of the reflection associated to $\alpha$, and write
$P_\alpha=L_\alpha U_\alpha$ for its Levi decomposition relative to
$A$. Then $L_\alpha$ has real rank one.
Write 
$G=P_{\alpha_1} \cdot \ldots \cdot P_{\alpha_n}$ as a product of such
parabolics. Set 
$$P^i:= P_{\alpha_1} \cdot \ldots\cdot P_{ \alpha_i} \qquad (0\leq i \leq n)$$
with the convention that $P^0=P$. 
We will prove by induction on $i$ that $H\bs H x_0 P^i / P $ is finite. 
The
theorem is reached after $n$ steps.
 
\par The case of $i=0$ is clear. Assume that 
\begin{equation} \label{finite}Hx_0P^i = H g_1 P \cup \ldots \cup H g_k P
\end{equation}
for elements $g_1, \ldots, g_k \in G$ and let $\alpha=\alpha_{i+1}\in \Pi$.    
Then it is sufficient to show for any $g\in Hx_0P^i$ that 
$H g P_ \alpha$ breaks into finitely 
many $H\times P$-orbits.

We shall first prove that there exists a
relatively open $H\times P$-orbit in
$H g P_ \alpha$. More precisely, we will show
that for some $r=1,\dots,k$ we have 
$Hg_r P\subset H g P_\alpha$ open.

\par Note that $\pf_\alpha= \pf + \gf^{-\alpha}+\gf^{-2\alpha}$ and 
set $V:=\exp(\gf^{-\alpha}+\gf^{-2\alpha})$. Then $VP$ is an open 
neighborhood of $\1$ in $P_\alpha$. 
As $Hx_0P^i$ is open and contains $g$,
we obtain an open  subset $O\subset V$ with $\1\in O$ 
such that $gx\in Hx_0P^i$ for all $x\in O$.
Moreover, because of (\ref{finite}) we have 
$O=\bigcup_{r=1}^k O_r$ with 
$$O_r=\{ x\in O\mid gx \in Hg_rP\}\, ,$$
and at least one of these sets
has non-empty interior by Baire's theorem.
Fix such an $r$, then
$HgxP=Hg_rP$ for every $x\in O_r,$
and as $O_rP$ has non-empty interior in $VP$
it follows that $Hg_rP$ has non-empty interior
in $HgP_\alpha$, hence is an open subset
by transitivity of  $H\times P$.

\par We can now show that $HgP_\alpha$
decomposes into finitely many orbits. 
Notice that we have
$HgP_\alpha=Hg_rP_\alpha$.
For simplicity we replace $H$ by $g_rHg_r^{-1}$,
and claim that if $HP$ is open in $HP_\alpha$
then the latter set is a finite
union of $H\times P$ orbits.

We write $\pf_\alpha = \lf_\alpha + \uf_\alpha$ for the Levi decomposition 
of the Lie algebra of $P_\alpha$. Further we denote by 
$$\pi_\alpha: \pf_\alpha \to \lf_\alpha$$
the projection along $\uf_\alpha$ and remark that the map  is 
a Lie algebra homomorphism. 
Set $\hf_\alpha:= \pi_\alpha(\pf_\alpha\cap \hf)$. As $HP$ is open we find
$$\hf+\pf=\hf+\pf_\alpha$$ and hence $\pf_\alpha=(\pf_\alpha\cap\hf)+\pf$.
In turn this implies that 
$$\lf_\alpha= \hf_\alpha + (\lf_\alpha \cap \pf).$$ In other words, 
$(\lf_\alpha, \hf_\alpha)$ is a rank-one spherical pair.  
We thus get that $H_\alpha\bs L_\alpha/ 
(L_\alpha\cap P)$ is finite 
(the fact that $L_\alpha$ can be non-connected
does not matter, because all its components intersect
non-trivially with $P$).
We write $$L_\alpha=\cup_{j=1}^m H_\alpha x_j (L_\alpha\cap P)$$
and claim that $$HP_\alpha=
\cup_{j=1}^m H x_j P.$$
As $P_\alpha=L_\alpha P$, it suffices to show that
$hx_j\in Hx_jP$ for all $h\in H_\alpha$ and all $j$.
Note that $\hf_\alpha$ is contained in the subalgebra
$(\pf_\alpha\cap\hf)+\uf_\alpha$ of $\pf_\alpha$, and
hence $H_\alpha$ is contained in the subgroup
$(P_\alpha\cap H)U_\alpha$ of $P_\alpha$. It follows that
$$hx_j\in (P_\alpha\cap H)U_\alpha x_j=(P_\alpha\cap H)x_j U_\alpha
\subset Hx_jP$$
as claimed.

Hence the proof of Theorem \ref{main thm} is reduced to 
the following result.

\begin{proposition}\label{th1} Let $G$ be a semisimple Lie group of real rank one and
$H$ a connected spherical subgroup. Then the number of $H$-orbits on $G/P$
is finite.
\end{proposition}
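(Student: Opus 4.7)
My plan is to split on whether $H$ is reductive in $G$ and to handle the two cases quite differently. In the \emph{reductive case}, I would invoke Theorem~\ref{main thm2} to produce a connected symmetric subgroup $H'\supset H$ with identical orbit decomposition on $G/P$; then Wolf's theorem \cite{Wolf} yields finiteness for $H'\bs G/P$ and hence for $H\bs G/P$. Theorem~\ref{main thm2} itself I would attack by classification: up to local isomorphism the rank one groups are $\SO(n,1)_0$, $\SU(n,1)$, $\Sp(n,1)$ and $F_{4(-20)}$; for each one lists the connected reductive subgroups $H$ admitting an open orbit on $G/P\cong K/M$, and then exhibits the required $H'$ as the identity component of the fixed-point subgroup of a well-chosen involution of $G$ containing $H$.

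In the \emph{non-reductive case}, the first step is to show that $H$ is contained in a conjugate of $P$. The idea is that in rank one $\pf$ is (up to conjugacy) the unique proper parabolic subalgebra of $\gf$, so the sphericity condition $\gf=\hf+\Ad(x)\pf$ combined with the nontriviality of the unipotent radical of $\hf$ should force $\hf\subset\Ad(g)\pf$ for some $g\in G$. After conjugation one has $H\subset P$ and thus $H$ fixes the base point of $G/P$; the Bruhat decomposition $G/P=\{eP\}\sqcup N^-\cdot eP$ then reduces the problem to counting $H$-orbits on the open cell $N^-\cdot eP$. Here $N^-$ is one- or two-step nilpotent of small dimension and $H$ acts via the induced action of $P$; using the root space decomposition $\gf=\gf^{-2\alpha}\oplus\gf^{-\alpha}\oplus\mf\oplus\af\oplus\gf^\alpha\oplus\gf^{2\alpha}$ together with the fact that the open $H$-orbit has full dimension in $N^-$, a short direct analysis should produce the stated count of $2$, $3$, or $4$ orbits in $G/P$.

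The principal obstacle will be Theorem~\ref{main thm2}: producing the symmetric $H'$ is not automatic, and verifying that it shares the \emph{full} orbit decomposition with $H$ (not merely the open orbit) requires a dimension check on every $H$-orbit and seems to rely on the classification. The non-reductive case, by contrast, is essentially a bounded computation once the containment $H\subset gPg^{-1}$ is established; locating that containment uses the rank one hypothesis crucially, since in higher rank there are intermediate parabolics into which $\hf$ could be forced without landing in any minimal one.
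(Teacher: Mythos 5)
Your overall skeleton (split into reductive and non-reductive, reduce the reductive case to a symmetric overgroup plus Wolf's theorem, reduce the non-reductive case to $H\subset P$ plus an analysis of the open Bruhat cell) is exactly the paper's architecture, but both halves of your plan have real gaps where the actual work lives. In the reductive case, your proposed route to Theorem~\ref{main thm2} --- ``list the connected reductive subgroups $H$ admitting an open orbit'' for each rank one group --- is precisely what the paper avoids, and for good reason: no such classification exists for real groups (the introduction says so explicitly), and producing one is harder than the theorem itself. The paper instead takes a \emph{maximal} proper reductive $\hf'\supset\hf$, uses Chen's and Berger's lists of maximal reductive and symmetric subalgebras to show that a maximal reductive subalgebra of a rank one simple algebra is either symmetric or one of three explicit non-spherical exceptions (Lemma~\ref{maximal subalgs}), and then proves that $H$ is transitive on every $H'$-orbit by a structural argument: Matsuki's theorem says that in rank one a symmetric $H'$ has only open and closed orbits on $G/P$; the open ones are handled by showing $H'=H(H'\cap M)$ with $H'\cap\pf\subset\mf$ (Lemmas~\ref{H cap P} and \ref{exist symmetric hf}), and the closed ones by showing the compact factor $H'_c$ lies in $yMy^{-1}$ while the non-compact ideal $\hnon$ lies in $\hf$. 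This disposes of the ``dimension check on every $H$-orbit'' that you correctly flag as the principal obstacle; your proposal offers no mechanism for it.

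In the non-reductive case, the containment $\hf\subset\Ad(g)\pf$ is not a consequence of sphericity at all --- it holds for \emph{every} non-reductive subalgebra of a rank one reductive algebra, and the tool that produces it is the Borel--Tits theorem: the normalizer of the unipotent radical $\uf\neq 0$ of $\hf$ sits inside a proper parabolic whose unipotent radical contains $\uf$, and in rank one that parabolic is conjugate to $\pf$. Your phrase ``should force'' papers over this; without Borel--Tits (or an equivalent) the step is unproved. Likewise, the ``short direct analysis'' of the open cell hides the one place sphericity genuinely enters: writing $\hf=\mf_1+\R X+\nf_1$ with $X=Y+Z$, $Z\in\af$, one must show $Z\neq 0$ (otherwise the reductive part of $H$ is compact and $N_1\Int_{M_1}(x)$ can never be open in $N$), and then use the resulting $\ad(X)$-grading of $\nf_1$, the identification of $N_1\backslash N$ with the complement $\nf_0$, and the dilation action of $\exp(tX)$ combined with compactness of $M_1$ to see that the nonzero orbits in $\nf_0$ are unions of spheres, giving the count of at most $2$, $3$, or $4$ orbits. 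These are the ideas your outline would need to supply.
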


\noindent{\it Example: $G=\Sl(2,\R)$.}
Every one-dimensional subalgebra
is conjugate to $\kf$, $\af$ or $\nf$.
The first two are symmetric, and in the third case finiteness
of $H\bs G/P$ follows from the Bruhat decomposition. Hence $H\bs G/P$
is finite for every non-trivial connected subgroup $H$.

\subsection{Simple groups}\label{simple}

Once the real rank is one we can easily reduce to the case that $G$ is simple. 
Otherwise $G$ is locally
isomorphic to $G_1\times K_2$ where $G_1$ is simple of real rank one and
$K_2$ is compact. Then $P=P_1\times K_2$ where $P_1\subset G_1$
is minimal parabolic, and hence $G/P=G_1/P_1$. Moreover,
if $H_1$ denotes the projection of $H$ on $G_1$,
then $H$-orbits on $G/P$ are the same as 
$H_1$-orbits on $G_1/P_1$.

\section{Non-reductive spherical subgroups}

In this section we prove Proposition \ref{th1} for
spherical subgroups which are not reductive.

\begin{lemma}\label{H in P} 
Let  $\gf$ be a real reductive Lie algebra of real rank one  and $\hf$ a subalgebra,
which is not reductive in $\gf$. 
Then $\hf\subset\pf$, up to conjugation.
\end{lemma}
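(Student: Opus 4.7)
The plan is to reduce to an algebraic setting and then invoke a parabolic-containment theorem. Let $H<G$ be the connected Lie subgroup with Lie algebra $\hf$, and let $\tilde H$ denote its Zariski closure in $G$ (inside a fixed faithful linear realization), with $\tilde\hf=\operatorname{Lie}(\tilde H)$. Then $\hf\subseteq\tilde\hf$, so the conclusion $\hf\subseteq\operatorname{Ad}(g)\pf$ will follow from the same containment for $\tilde\hf$. Moreover the failure of $\hf$ to be reductive in $\gf$ passes to $\tilde\hf$: the $\operatorname{ad}$-invariant subspaces of $\gf$ under $\hf$ and under $\tilde\hf$ coincide, because the stabilizer of a subspace in $\operatorname{GL}(\gf)$ is Zariski-closed, and for the algebraic $\tilde\hf$ reductivity in $\gf$ is equivalent to the vanishing of the unipotent radical $\uf=R_u(\tilde H)$. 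After this replacement I may assume $\hf$ itself is algebraic with $\uf\neq 0$; in particular $\uf$ consists of $\operatorname{ad}_\gf$-nilpotent elements.

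I next appeal to the real form of the Borel--Tits theorem: an algebraic subgroup of a real reductive algebraic group whose unipotent radical is non-trivial is contained in a proper parabolic subgroup, with its unipotent radical landing in the unipotent radical of that parabolic. Since $\gf$ has real rank one, parabolic subalgebras correspond to subsets of the one-element set of simple restricted roots, so the only proper parabolic subalgebras are, up to conjugation, conjugate to $\pf$. Hence $\tilde\hf$, and a fortiori $\hf$, is contained in some $\operatorname{Ad}(g)\pf$.

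The main obstacle, were one to avoid the Borel--Tits theorem as a black box, would be to prove the final containment by a hands-on rank-one argument. Such an argument would go: pick $0\neq N\in\uf$ and use the standard fact that every non-zero $\operatorname{ad}_\gf$-nilpotent element is $G$-conjugate into $\nf$ to arrange $N\in\nf$; then apply Engel's theorem to conclude that the nilpotent Lie algebra $\uf$ of $\operatorname{ad}_\gf$-nilpotent elements lies in a maximal such subalgebra, which in rank one is a conjugate of $\nf$, allowing a further conjugation fixing $\pf$ to place $\uf\subseteq\nf$; and finally show that a Levi complement $\lf$ of $\uf$ in $\hf$, which is reductive in $\gf$ and normalizes $\uf$, must lie in $\mf+\af$. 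This last step is the delicate one, requiring a case-by-case inspection of the rank-one types, especially those of type $BC_1$, where $\nf$ is two-step nilpotent and the $\lf$-actions on subalgebras $\uf\subseteq\nf$ meeting $\gf^{2\alpha}$ or only $\gf^\alpha$ must be analyzed separately.
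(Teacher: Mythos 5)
Your proof is correct and follows essentially the same route as the paper: both invoke the Borel--Tits theorem to place the non-reductive subalgebra inside a proper parabolic subalgebra, and then use that in real rank one every proper parabolic is conjugate to $\pf$. The paper's version is terser---it applies Borel--Tits directly to the unipotent radical $\uf$ of $\hf$, noting that the normalizer of $\uf$ (hence $\hf$ itself) lands in the parabolic---while your passage to the Zariski closure is just a more explicit justification for applying the algebraic-group statement; the hands-on alternative you sketch at the end is not needed and is not pursued in the paper either.
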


\begin{proof} This is an easy consequence of the main result of \cite{BorelTits}.
Let $\uf$ denote the unipotent radical of $\hf$, 
then  $\uf\neq 0$ by assumption. According to \cite{BorelTits}
there is a parabolic subalgebra $\pf'\subset\gf$ such that
\begin{enumerate}
\item the normalizer of $\uf$ is contained in $\pf'$, and
\item the unipotent radical of $\pf'$ contains $\uf$. 
\end{enumerate}
Hence $\hf\subset\pf'\neq\gf$, and $\pf'$ is conjugate to  $\pf$ because of rank one.
\end{proof}

\par For a rank one group $G$ we let $s$ be a Weyl group representative and recall the 
Bruhat decomposition $G=P\cup PsP$. The following result
together with Lemma \ref{H in P} 
implies the conclusion of Proposition \ref{th1} for $H$ non-reductive

\begin{lemma}\label{reduction} Let $\gf$ be a simple real rank one Lie algebra, $\hf$ a spherical subalgebra,
and $\pf$ a minimal parabolic subalgebra for which $\hf\subset \pf$. 
Then the Bruhat decomposition $G/P = P \cup PsP$ is $H$-stable and there are at most four orbits of $H$ on $G/P$. 
\end{lemma}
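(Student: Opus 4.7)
The plan. Since $\hf\subset\pf$, both $P$-orbits on $G/P$—the closed $\{eP\}$ and the open dense $PsP/P$—are $H$-stable, so the Bruhat decomposition is $H$-stable and $eP$ is a single $H$-orbit. It remains to bound $|H\bs(PsP/P)|\le 3$.

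I would parametrize $PsP/P$ by $N$ via $n\mapsto nsP$; since $P\cap sPs^{-1}=MA$ and $P=N\cdot MA$, this is a $P$-equivariant diffeomorphism $P/MA\cong PsP/P$ under which $p=n_1m_1a_1\in NMA$ acts on $N$ by $n\mapsto n_1\cdot\Ad(m_1a_1)n$. Using the Levi decomposition $\hf=\hf_0\ltimes\hf_u$ with $\hf_u=\hf\cap\nf$ the nilradical of $\hf$ (contained in $\nf$ by Borel--Tits) and $\hf_0$ a Levi placed in $\mf\oplus\af$ after conjugation in $P$, the subgroup $H_u\subset N$ acts on $N$ by left translation and $L_H$ by $\Ad$-conjugation. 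It follows that $H$-orbits on $N$ are in bijection with $L_H$-orbits on the coset space $H_u\bs N$.

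For $\gf=\so(n,1)$ the nilradical $N$ is abelian, so $H_u\bs N=\nf/\hf_u$ is a vector space on which $L_H\subset K\cdot\R_{>0}$ (with $K\subset M$ compact) acts linearly. Sphericity gives an open $L_H$-orbit; since a compact group admits no open orbit on a non-trivial vector space, this forces the $\R_{>0}$-factor to be non-trivial and, when $\dim(\nf/\hf_u)\ge 2$, the compact $K$ to act transitively on the unit sphere of $\nf/\hf_u$. A case-split on $k=\dim(\nf/\hf_u)$ then yields at most three $L_H$-orbits: one orbit if $k=0$; three orbits $\{0\}$, $\R_{>0}$, $\R_{<0}$ if $k=1$ (a connected compact group acts trivially on $\R$); two orbits ($\{0\}$ and its complement) if $k\ge 2$. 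Combined with $\{eP\}$, this gives at most four $H$-orbits on $G/P$.

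The main obstacle is the Heisenberg case ($\gf=\su(n,1)$, $\sp(n,1)$, $\mathfrak{f}_{4(-20)}$), where $H_u$ need not be normal in $N$ and $H_u\bs N$ is not a priori linear. My plan there is first to descend to the abelianization $\nf/\zf$ (with $\zf=\gf^{2\alpha}$ the center of $\nf$), where the preceding linear analysis bounds the number of $H$-orbits on $N/Z$ by three, and then to lift back to $N$ by showing that the $H$-stabilizer of a point in $N/Z$ acts transitively on its central $Z$-fiber. The lift uses the bracket identity $[\gf^\alpha,\gf^\alpha]=\gf^{2\alpha}$ together with the dimension count from sphericity (the open $H$-orbit in $N$ has dimension $\dim\nf$), which force $\hf_u\cap\zf$ to be large enough that central translations integrate each $N/Z$-orbit into a single $N$-orbit.
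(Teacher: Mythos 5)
Your treatment of the Bruhat decomposition and of the case $\gf=\so(1,n)$ is correct and is essentially the paper's argument: the open cell is identified with $N$, the $H$-orbits there with the orbits of your $L_H$ (the paper's $M_1S_1$) on a complement of $\hf_u$ in $\nf$, and the bound of three comes from exactly the sphere/dilation case-split you describe, using that the compact part acts isometrically and the split part by dilations.

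The gap is the Heisenberg case, which you leave as a plan rather than a proof, and the plan as written is not justified and is at risk of failing. First, the key lifting step --- that the $H$-stabilizer of a point of $N/Z$ acts transitively on its central fiber --- is asserted, not proved; when $\hf_u\cap\zf=0$ the stabilizer of the origin of $N/Z$ acts on the fiber $Z\cong\gf^{2\alpha}$ only through the compact part and the dilations $e^{2t}$, which need not be transitive, and then counting orbits of $N$ through orbits of $N/Z$ can exceed three. Second, even granting transitivity over the open orbit, the count requires control over \emph{every} fiber. The paper avoids the quotient $N/Z$ entirely: one first shows that the $\af$-component $Z$ of the Levi generator is nonzero (else no open orbit), so $\ad(X)$ has disjoint spectra on $\gf^\alpha_\C$ and $\gf^{2\alpha}_\C$; hence $\nf_1=(\nf_1\cap\gf^\alpha)\oplus(\nf_1\cap\gf^{2\alpha})$, the orthogonal complement $\nf_0$ is likewise graded, and by \cite{Helgason}, Ch.~IV, Lemma 6.8, the map $X\mapsto N_1\exp X$ is an $M_1S_1$-equivariant diffeomorphism of $\nf_0$ onto $N_1\backslash N$. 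This linearizes $H_u\backslash N$ uniformly, whether $N$ is abelian or of Heisenberg type, and the isometry/dilation/sphere argument then applies verbatim to $\nf_0$. You would need to supply this grading-and-complement step (or a genuine proof of your fiber-transitivity claim) to close the argument.
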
 

\begin{proof} 
We denote by $P=MAN$ the standard Langlands decomposition
of $P$ relative to $A$.

The cells of the Bruhat decomposition are $P$-stable, 
hence also $H$-stable.
In particular, the closed cell $P\in G/P$ is an $H$-orbit.
Hence by assumption the open cell $\Oc:=NsP$ admits
at least one open $H$-orbit, and the assertion is
that then it decomposes into at most three $H$-orbits.

We decompose $\hf=\lf \ltimes \nf_1$ with $\lf$ reductive in $\gf$ and 
$\nf_1$ an ideal which acts on $\gf$ nilpotently. 
As $\hf\subset\pf$ we have $\nf_1\subset\nf$, and 
it is no loss of generality 
to assume that $\lf\subset \mf +\af$.  
Then with $\mf_1= \lf \cap \mf$ we have  
$$\hf= \mf_1 + \R X +\nf_1$$ for some $X=Y+Z$ with 
$Y \in \mf$ and $Z\in\af$. Since $[\lf,\lf]\subset\mf_1$, 
the element $X$ belongs to the center of $\lf$ and commutes with $\mf_1$.
Let $\sf_1=\R X$, then both $\sf_1$ and $\mf_1$ normalize $\nf_1$ and we have
$$H=M_1S_1N_1$$
for the corresponding subgroups of $G$. Since
$M_1S_1s\subset sP$
we see that the $H$-orbits in $NsP$
are the sets $N_1\Int_{M_1S_1}(x)sP$ for $x\in N$,
where $\Int_g(x)=gxg^{-1}$.
In particular, $N_1sP\subset NsP$ is an $H$-orbit.
If $N_1=N$ we are done, hence we may assume $\nf_1\subsetneq \nf$.

Note that $Z\neq 0$. Otherwise $X=Y$, hence $X\in\mf_1$ and $\hf= \mf_1+\nf_1$.
Then $H=M_1 N_1$ and $H$-orbits 
have the form $N_1\Int_{M_1}(x)sP$.
Since $N_1\Int_{M_1}(x)$ cannot be open in $N$ for any $x$, a contradiction is reached. 

The element $X$ acts semisimply on $\nf_\C$ and preserves
the subspace $\nf_{1\C}$. We denote by $\alpha$ the indivisible positive root of $\af$, then
$\nf=\gf^{\alpha}\oplus \gf^{2\alpha}$ and $[\nf,\nf]\subset \gf^{2\alpha}$. 
Since $Z\neq 0$ the spaces
$\gf^{\alpha}_\C$ and $\gf^{2\alpha}_\C$ have no eigenvalues
in common for $\ad(X)$. It follows that
\begin{equation}\label{grading1}
\nf_1=(\nf_1 \cap\gf^{\alpha})\oplus (\nf_1\cap\gf^{2\alpha}).
\end{equation}
Let $\nf_0$ be the orthogonal complement of $\nf_1$ in $\nf$, then $\nf_0\neq 0$ and
(\ref{grading1}) implies
\begin{equation}\label{grading0}
\nf_0=(\nf_0 \cap\gf^{\alpha})\oplus (\nf_0\cap\gf^{2\alpha}).
\end{equation}
It follows from (\ref{grading1}) and (\ref{grading0}) together with
\cite{Helgason} Ch.~IV, Lemma 6.8, that the exponential map
induces a diffeomorphism of $\nf_0$ with the left coset space $N_1\bs N$.
Note that $\nf_0$ is $M_1S_1$-invariant. 
We conclude that the $H$-orbits in $NsP$ correspond to the 
orbits of $\Ad(M_1S_1)$ on $\nf_0$. In particular, the $H$-orbit $N_1sP$ corresponds 
to $\{0\}\subset\nf_0$.

Since $M$ acts isometrically on $\nf$ it follows that $M_1$ acts isometrically
on $\nf_0$. Furthermore, let $X=Y+Z$ be normalized such that $\alpha(Z)=1$ and put
$s_t=\exp(tX)\in S_1$, then for $j=1,2$
\begin{equation}\label{dilation}
\|\Ad(s_t)x\|=e^{jt}\|x\|, \qquad x\in \nf_0 \cap\gf^{j\alpha}, t\in\R.
\end{equation}
It follows that if $\Oc_1\neq \{0\}$ is an $\Ad(M_1S_1)$-orbit in $\nf_0$,  
then the intersection of $\Oc_1$ with every sphere in $\nf_0$ 
is non-empty and is an $\Ad(M_1)$-orbit.

Assume first that $\dim\nf_0>1$. Then spheres in $\nf_0$ are connected, and
by compactness an open $\Ad(M_1)$-orbit is 
the entire sphere. Hence we conclude that the open $\Ad(M_1S_1)$-orbit in $\nf_0$ 
is $\nf_0\setminus \{0\}$. In this case
$NsP$ decomposes in two orbits, $N_1sP$ and its complement.

Assume finally that $\dim\nf_0=1$. In this case 
it follows from (\ref{dilation}) that $Ad(s_t)x=e^{jt}x$ for all $x\in\nf_0$ 
and all $t\in\R$
(where $j=1$ or $2$). Hence in this case there are three orbits in $\nf_0$, corresponding to
$\{0\}$ and the two components of its complement.
\end{proof}

\begin{rmk} The proof shows a bit more.  
The open $N$-orbit $NsP$ breaks into at most 
three $H$-orbits. 
If we identify $N$ with $\R^n$ and $N_1$ with $\R^k$, then these $H$-orbits are of the following type:
\begin{enumerate}
\item $\R^n$ (one orbit, the case where $H\supset N$) when $k=n$. 
\item $\R^k$ and $\R^{n-k}\bs\{0\} \times \R^k$ when $0\leq k <n-1$.  
\item $\R^{n-1}$, $\R^+ \times \R^{n-1}$, $\R^- \times \R^{n-1}$ when $k=n-1$. 
\end{enumerate}
\end{rmk}

\section{Some results in real rank one}\label{sect 4}

We now turn to the case where $G$ has real rank one and our spherical
subgroup $H\subset G$ is reductive, in which case Proposition  \ref{th1}
will ultimately be shown from Theorem  \ref{main thm2}. 
The proof of that theorem will be given after 
we have prepared for it through several sections.
Our first preparation, Proposition \ref{prop},
consists of showing in this case 
that if $H\subset H'\subset G$ with
$H'$ symmetric, then the $H$-orbits in $G/P$ coincide with the $H'$-orbits.
The proof uses Matsuki's description of the $H'$-orbits on $G/P$.

\begin{lemma}\label{decomposition}  
Let $\gf$ be a real reductive Lie algebra with Cartan involution
$\theta$, and let $\gf_n\subset\gf$ be its maximal non-compact ideal.
Let $\hf\subset \gf$ be a $\theta$-stable subalgebra
such that $\gf=\hf+\kf$. Then $\gf_n\subset\hf$. 
\end{lemma}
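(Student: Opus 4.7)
The plan is to exploit the $\theta$-stability of $\hf$ to pull the entire $\sf$-part of the Cartan decomposition into $\hf$, and then to observe that $\sf$ already generates the ideal $\gf_n$. Since $\hf$ is $\theta$-stable, the Cartan-type projections $h \mapsto \tfrac{1}{2}(h\pm\theta h)$ map $\hf$ into itself, giving the direct sum decomposition $\hf = (\hf\cap\kf)\oplus(\hf\cap\sf)$. Under the hypothesis $\gf = \hf+\kf$, any $s\in \sf$ can be written as $s = h + k$ with $h\in\hf$ and $k\in\kf$; comparing $\sf$-components shows that $s$ equals the $\sf$-component of $h$, which by $\theta$-stability itself belongs to $\hf$. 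Hence $\sf\subset \hf$.

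Next I would verify that $\sf$ is contained in $\gf_n$. The maximal compact ideal $\gf_c$ of $\gf$ lies entirely in $\kf$, so it has trivial intersection with $\sf$; since $\gf = \gf_c\oplus \gf_n$ as ideals, this forces $\sf\subset \gf_n$. The remaining task is to show that $\gf_n$ is generated as a Lie algebra by $\sf$. Decompose $\gf_n = \zf_\sf \oplus \bigoplus_i \gf_i$, where $\zf_\sf = \zf(\gf)\cap\sf$ is the non-compact part of the center and the $\gf_i$ are the non-compact simple ideals. Each $\sf_i := \sf\cap\gf_i$ is non-zero (otherwise $\gf_i\subset\kf$ would be compact), and the subspace $\sf_i + [\sf_i,\sf_i]$ is a non-zero $\theta$-stable ideal of $\gf_i$, hence equal to $\gf_i$ by simplicity.

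Combining these observations, $\hf$ contains $\sf$, hence contains $\zf_\sf\subset \sf$ and each $\sf_i + [\sf_i,\sf_i] = \gf_i$ (using that $\hf$ is a subalgebra). Summing gives $\gf_n = \zf_\sf \oplus \bigoplus_i \gf_i \subset \hf$, as required. The argument is essentially formal; the only point requiring care is the bookkeeping in the reductive case to separate the central and semisimple contributions to $\gf_n$, but no genuine obstacle arises.
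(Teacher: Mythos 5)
Your proof is correct and follows exactly the same route as the paper's (one-line) proof: the $\theta$-stability of $\hf$ together with $\gf=\hf+\kf$ forces $\sf\subset\hf$, and $\gf_n$ is generated by $\sf$. You have merely filled in the details that the authors leave to the reader, including the careful treatment of the central part $\zf(\gf)\cap\sf$ in the reductive case.
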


\begin{proof} It follows from the assumption
that $\sf\subset\hf$ and this implies
the conclusion as $\gf_n$ is generated by $\sf$.
\end{proof}

Recall that a subalgebra $\hf'\subset\gf$ is called
symmetric if it is the fixed points of an involution  
of $\gf$. Recall also that for every involution there
exists a commuting Cartan involution. Given an involution $\sigma$,
we write $\gf=\hf'+\qf$ for the corresponding decomposition of $\gf$.

\begin{lemma}\label{H cap P}
Let $\gf$ be simple of real rank one
and let $\hf'$ be a proper symmetric subalgebra
defined by an involution $\sigma$ commuting with $\theta$.
Let $\pf=\mf+\af+\nf$ be a minimal
parabolic subalgebra with the indicated Langlands decomposition,
and assume $\af\subset\sf\cap\qf$.
Then 
\begin{equation*}
\gf=\hf'+\pf \quad\text{and}\quad\hf'\cap\pf\subset \mf.
\end{equation*} 
\end{lemma}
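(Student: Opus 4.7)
The plan is to exploit the fact that $\sigma$ and $\theta$ commute, so both the Cartan decomposition and the restricted root space decomposition of $\gf$ behave well under $\sigma$. The hypothesis $\af\subset\qf$ is the key leverage: it forces $\sigma|_\af = -1$, which in turn means $\sigma$ swaps positive and negative root spaces, so $\sigma$ moves things that are ``missing'' from $\pf$ into things that are ``present'' in $\pf$.

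First I will record the relevant $\sigma$-equivariances. Since $\sigma\theta=\theta\sigma$, the subspace $\kf$ is $\sigma$-stable, and therefore $\mf=Z_\kf(\af)$ is $\sigma$-stable as well. Applying $\sigma$ to $[H,Y]=\lambda(H)Y$ for $H\in\af$ and $Y\in\gf^\lambda$, and using $\sigma(H)=-H$, yields $\sigma(\gf^\lambda)=\gf^{-\lambda}$ for every restricted root $\lambda$. In particular $\sigma(\nf)=\bar\nf$, where $\bar\nf:=\theta(\nf)$.

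For the identity $\gf=\hf'+\pf$, I will use the decomposition $\gf=\mf\oplus\af\oplus\nf\oplus\bar\nf$ together with $\pf=\mf+\af+\nf$, so it suffices to show $\bar\nf\subset\hf'+\pf$. For any $Y\in\bar\nf$, I write
$$Y=(Y+\sigma(Y))-\sigma(Y),$$
where $Y+\sigma(Y)$ lies in the $+1$-eigenspace $\hf'$ of $\sigma$, and $\sigma(Y)\in\nf\subset\pf$.

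For the inclusion $\hf'\cap\pf\subset\mf$, I take $X\in\hf'\cap\pf$ and decompose it as $X=X_0+X_\af+X_+$ along $\mf\oplus\af\oplus\nf$. Applying $\sigma$ gives
$$X=\sigma(X)=\sigma(X_0)-X_\af+\sigma(X_+),$$
with $\sigma(X_0)\in\mf$, $-X_\af\in\af$, and $\sigma(X_+)\in\bar\nf$. Equating components in the direct sum $\mf\oplus\af\oplus\nf\oplus\bar\nf$ forces $X_\af=-X_\af$ and $X_+=0=\sigma(X_+)$, hence $X=X_0\in\mf$. There is no substantive obstacle here: everything reduces to the two $\sigma$-equivariance statements above together with the direct sum structure of the restricted root decomposition. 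Properness of $\hf'$ is not invoked explicitly; it is implicit in the fact that $\af\neq 0$ and $\af\subset\qf$ force $\sigma|_\af=-1$ to be nontrivial.
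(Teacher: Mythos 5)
Your proof is correct, and it is more self-contained than the paper's: the paper disposes of this lemma in one line by citing Matsuki (\cite{Mat}, Theorem~3), recording only the key observation that $\sigma(\nf)=\theta(\nf)$ --- which is precisely the equivariance $\sigma(\gf^\lambda)=\gf^{-\lambda}$ that you derive from $\sigma|_\af=-1$. What you have done is unwind that citation into a complete elementary argument: the decomposition $\gf=\mf\oplus\af\oplus\nf\oplus\theta(\nf)$, the averaging trick $Y=(Y+\sigma(Y))-\sigma(Y)$ for the surjectivity statement $\gf=\hf'+\pf$, and the component-by-component comparison for $\hf'\cap\pf\subset\mf$ are all sound, and the $\sigma$-stability of $\mf=\zf_\kf(\af)$ is correctly justified from $\sigma\theta=\theta\sigma$ and $\sigma(\af)=\af$. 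Your closing remark is also accurate: neither properness of $\hf'$ nor real rank one is genuinely used beyond ensuring that the hypothesis $\af\subset\sf\cap\qf$ is not vacuous, so your argument in fact proves the corresponding statement in arbitrary real rank whenever a maximal abelian $\af\subset\sf$ lies in $\qf$. The trade-off is simply length versus transparency: the paper's citation keeps the exposition short, while your version makes the lemma independent of Matsuki's structure theory.
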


\begin{proof} 
This
follows from \cite{Mat}, Theorem~3, 
since $\sigma(\nf)=\theta(\nf)$.
\end{proof}

\begin{lemma}\label{exist symmetric hf} 
Let $G$ be a simple Lie group of real rank one 
and let $\hf\subset \hf'\subsetneq \gf$ be 
reductive subalgebras such that 
\begin{enumerate}
\item $\hf$ is spherical
\item $\hf'$ is symmetric and defined by an 
involution commuting with $\theta$.
\end{enumerate}
Then:
\begin{enumerate}\setcounter{enumi}{2}
\item There exists a minimal parabolic subalgebra
$\pf$ with $\af\subset\sf\cap\qf$ 
such that $\hf'=\hf+(\hf'\cap\mf)$ and
$$H'=H(H'\cap M).$$
\item 
Let $\hnon$ be the maximal non-compact ideal in  $\hf'$, then
$\hnon\subset \hf.$
\end{enumerate} 
\end{lemma}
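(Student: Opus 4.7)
The plan is to prove part~(3) first and then deduce part~(4) quickly from Lemma~\ref{decomposition}.

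First I would establish that $\sf\cap\qf\ne 0$. If instead $\sf\subset\hf'$, then $\gf=\hf'+\kf$, and Lemma~\ref{decomposition} applied to the $\theta$-stable $\hf'\subset\gf$ would force $\gf_n\subset\hf'$. Since $\gf$ is simple non-compact, $\gf_n=\gf$, contradicting $\hf'\subsetneq\gf$. Because $\gf$ has real rank one, any one-dimensional $\af\subset\sf\cap\qf$ is automatically maximal abelian in $\sf$, and is contained in exactly two minimal parabolic subalgebras $\pf$ and $\theta(\pf)$; both satisfy the hypothesis of Lemma~\ref{H cap P}, which gives $\gf=\hf'+\pf$ and $\hf'\cap\pf\subset\mf$ for either choice.

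The core of (3) is then to select a $\pf$ (from these two candidates) for which $\gf=\hf+\pf$ additionally holds. Once such $\pf$ is in hand the decomposition $\hf'=\hf+(\hf'\cap\mf)$ will be immediate: writing $X\in\hf'$ as $X=Y+Z$ with $Y\in\hf$ and $Z\in\pf$ forces $Z=X-Y\in\hf'\cap\pf\subset\mf$. To produce such a $\pf$, I would exploit sphericality of $\hf$: via $G=KP$ it yields some $k\in K$ with $\gf=\hf+\Ad(k)\pf$, whose $\af$-part $\Ad(k)\af$ lies in $\sf$ but not necessarily in $\sf\cap\qf$. The open $H$-orbit $H\cdot kP$ sits inside an open $H'$-orbit, and Matsuki's description of open $H'$-orbits in the symmetric case (\cite{Mat}) guarantees that every such orbit admits a representative at which the conjugated parabolic is in standard position. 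In rank one this narrows the open $H'$-orbit containing $H\cdot kP$ to $H'\cdot eP$ or $H'\cdot sP$ (with $s\in N_K(\af)$ representing the Weyl reflection); replacing $\pf$ by $\theta(\pf)$ if necessary, we may assume it is $H'\cdot eP$, so $k\in H'P$. A polar-decomposition reduction within $H'=(H'\cap K)\exp(\hf'\cap\sf)$ then exploits that $H'\cap K$ preserves both $\sf$ and $\qf$ to yield the desired $\pf$. The group-level identity $H'=H(H'\cap M)$ will follow by exponentiating the Lie-algebra decomposition and invoking connectedness of $H$ and of $H'\cap M$.

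For (4), combining $\hf'=\hf+(\hf'\cap\mf)$ with $\mf\subset\kf$ gives $\hf'=\hf+(\hf'\cap\kf)$. After arranging, via Mostow's theorem, that $\hf$ is $\theta$-stable compatibly with the $\theta$-stable $\hf'$, I would apply Lemma~\ref{decomposition} inside $\hf'$, treating $\hf'$ as the ambient reductive algebra (with Cartan involution $\theta|_{\hf'}$) and $\hf$ as the $\theta$-stable subalgebra. The lemma's conclusion is then precisely that the maximal non-compact ideal $\hnon$ of $\hf'$ lies in $\hf$.

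The principal obstacle is the choice of $\pf$ in (3): sphericality of $\hf$ produces a suitable parabolic only up to $G$-conjugation, whereas the condition $\af\subset\sf\cap\qf$ pins down the conjugacy class of the Langlands $A$-factor. Reconciling the two constraints rests on Matsuki's parametrization of $H'$-orbits on $G/P$ together with the rank-one freedom to pass to an opposite parabolic, as sketched above.
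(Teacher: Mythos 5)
Your overall skeleton matches the paper's: you use Lemma~\ref{H cap P} for the standard-position parabolic, Matsuki's orbit description to relate the spherical parabolic to it, and Lemma~\ref{decomposition} (after a Mostow conjugation) for part~(4). But there is a genuine gap at the pivotal step of part~(3). Your reduction correctly arrives at $k\in H'P$, i.e.\ $\gf=\hf+\Ad(x)\pf$ for some $x\in H'$, and you then need to pass from this to $\gf=\hf+\pf$ for the standard $\pf$ itself (possibly after swapping $\pf$ with $\theta(\pf)$). The proposed ``polar-decomposition reduction'' does not accomplish this: writing $x=k_1\exp(s)$ with $k_1\in H'\cap K$ and $s\in\hf'\cap\sf$, the element $k_1$ preserves $\sf$ and $\qf$ but in general does \emph{not} normalize $\af$ or $\pf$, and $\exp(\hf'\cap\sf)$ is not contained in $P$; so there is no evident way to absorb $x$ while keeping $\hf$ fixed. (Indeed $\gf=\hf+\pf$ is \emph{equivalent} to the conclusion of (3) via $\gf=\hf'+\pf$, so proving it head-on at this stage is essentially proving the lemma.)

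The paper sidesteps exactly this difficulty by never eliminating $x$. From $\gf=\hf+\Ad(x)\pf$ and $\hf\subset\hf'$ one gets $\hf'=\hf+\Ad(x)(\hf'\cap\pf)=\hf+\Ad(x)(\hf'\cap\mf)$, where $\Ad(x)(\hf'\cap\mf)$ is a \emph{compact} subalgebra of $\hf'$ by Lemma~\ref{H cap P}. Hence the double coset $Hx(H'\cap M)$ is open (by the infinitesimal spanning) and closed (compactness of $H'\cap M$) in the connected group $H'$, so it equals $H'$; in particular $e\in Hx(H'\cap M)$ forces $x\in H(H'\cap M)$, giving $H'=H(H'\cap M)$ and, by differentiation, $\hf'=\hf+(\hf'\cap\mf)$ simultaneously. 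If you adopt this open--closed double coset argument your part~(3) goes through. Part~(4) is then essentially correct, modulo one small point you should make explicit: after conjugating $\hf$ inside $H'$ to a $\theta$-stable $\hf_1$ and concluding $\hnon\subset\hf_1$ from Lemma~\ref{decomposition}, you must use that $\hnon$ is an ideal of $\hf'$ (hence fixed by $\Ad(H')$) to transfer the inclusion back to the original $\hf$.
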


\begin{proof} 
Let $\pf_1$ be a minimal parabolic subalgebra for which
$\hf+\pf_1=\gf$. Then $\hf'+\pf_1=\gf$ and it follows from
\cite{Mat}, Theorems 1 and 3, that $\pf_1$ is
$H'$-conjugate to a minimal parabolic subalgebra
$\pf$ with $\af\subset\sf\cap\qf$. Thus
$\gf=\hf+\Ad(x)\pf$ for some $x\in H'$.
Since $\hf\subset \hf'$ this implies that
$$\hf'=\hf+\hf'\cap\Ad(x)\pf=\hf+\Ad(x)(\hf'\cap\pf).$$
{}From Lemma \ref{H cap P} we find $\hf'\cap\pf=\hf'\cap\mf$,
which is compact. It follows that $Hx(H'\cap M)$ is open and closed in $H'$,
hence equal to $H'$. Hence $H\times(H'\cap M)$ is left$\times$right transitive
on $H'$, and (3) follows.

Since $\hf$ is reductive in $\gf$, it is reductive in $\hf'$.
Hence some $H'$-conjugate $\hf_1$ of it is $\theta$-stable.
The conclusion in (3) is valid for $\hf_1$
and hence $\hf'=\hf_1+\hf'\cap\kf$.
It now follows from  Lemma \ref{decomposition}
that $\hnon\subset\hf_1$.
Since $\hnon$ is an ideal this implies $\hnon\subset\hf$ as well.
\end{proof}

\begin{proposition}\label{prop}
Let $G$ be a connected simple Lie group of real rank one
and let $H\subset H'$ be connected reductive subgroups
such that $H$ is spherical and $H'$ is symmetric and proper in $G$. 
Then $H$ is transitive on each $H'$-orbit in $G/P$.
\end{proposition}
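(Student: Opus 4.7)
The statement is that each $H'$-orbit on $G/P$ coincides with an $H$-orbit; equivalently, for every $x \in G$ one must show $H' = H \cdot (H' \cap xPx^{-1})$.

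First I would fix the minimal parabolic $P = MAN$ provided by Lemma \ref{exist symmetric hf}, for which $\af \subset \sf \cap \qf$, the decomposition $H' = H \cdot (H' \cap M)$ holds, and the maximal non-compact ideal $\hnon$ of $\hf'$ is contained in $\hf$. For the open orbit $H' \cdot eP$ the identity is immediate: by Lemma \ref{H cap P} we have $H' \cap P = H' \cap M$, hence $H \cdot (H' \cap P) = H \cdot (H' \cap M) = H'$, and therefore $H' \cdot eP = H \cdot eP$.

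For the remaining $H'$-orbits I would invoke Matsuki's classification of $H' \bs G / P$ in the symmetric setting (\cite{Mat}, Theorems 1 and 3). In rank one, each $H'$-orbit admits a representative $xP$ with $x$ in a small fundamental set, and for such $x$ the conjugated parabolic $xPx^{-1}$ has an explicit structure relative to the $(\sigma,\theta)$-decomposition of $\gf$. The goal would be to apply an analogue of Lemma \ref{exist symmetric hf}(3) to the conjugated parabolic, yielding $H' = H \cdot (H' \cap xMx^{-1})$, whence $H \cdot xP = H' \cdot xP$ since $xMx^{-1} \subset xPx^{-1}$ fixes $xP$.

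The main obstacle is that the proof of Lemma \ref{exist symmetric hf}(3) relied on the sphericity condition $\hf + \pf = \gf$, which holds precisely at the open $H'$-orbit; for non-open orbits one has $\hf + \Ad(x)\pf \subsetneq \gf$, so the same argument does not apply verbatim. The key additional ingredient is Lemma \ref{exist symmetric hf}(4): the quotient $H'/H \cong (H' \cap M)/(H \cap M)$ lies entirely in the complement $\hcpt$ of $\hnon$ in $\hf'$. For each non-open orbit I would verify, by a dimension count using Matsuki's explicit representatives, that this ``compact excess'' of $H'$ over $H$ is absorbed into the stabilizer $H' \cap xPx^{-1}$ modulo $H$; this case analysis is the crux of the argument.
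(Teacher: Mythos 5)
Your skeleton matches the paper's: fix the parabolic from Lemma \ref{exist symmetric hf}, use Matsuki's description of $H'\backslash G/P$, and feed in parts (3) and (4) of that lemma. Your treatment of the open orbit through $eP$ is correct (and extends to the other open orbits, whose representatives lie in $N_K(\af)$ and hence normalize $M$, so $H'xP=H(H'\cap M)xP=HxP$ — a point you should make explicit, since there may be several open orbits). But the part you yourself call ``the crux'' — the non-open orbits — is left as a plan (``I would verify, by a dimension count\dots'') rather than an argument, and that is a genuine gap: without it the proposition is not proved.

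The missing idea is not a case-by-case dimension count but a short structural observation. In rank one, Matsuki's description says every non-open $H'$-orbit is closed with a representative $y\in K$ satisfying $\Ad(y)(\af)\subset\hf'$. Since $\Ad(y)(\af)$ is a noncompact abelian subalgebra, it must lie in the noncompact ideal $\hf'_n$, hence it is centralized by the complementary compact ideal $H'_c$. A compact subgroup centralizing $\Ad(y)(\af)$ lies in $yMy^{-1}$, which stabilizes the coset $yP$. Writing $H'=H'_nH'_c$ and using $\hf'_n\subset\hf$ from Lemma \ref{exist symmetric hf}(4), one gets $H'yP=H'_nH'_cyP=H'_nyP=HyP$. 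Note also that your attempt to transport Lemma \ref{exist symmetric hf}(3) to the conjugated parabolic $xPx^{-1}$ is a dead end for exactly the reason you identify (sphericity fails off the open orbit); the closed-orbit case is resolved by the centralizer argument above, not by an analogue of part (3).
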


\begin{proof} 
Choose a Cartan involution which commutes with the involution
which defines $H'$.
Let $\pf$ be as in Lemma \ref{exist symmetric hf}.
Since the real rank of $G$ is one, it follows from
Matsuki's orbit description in \cite{Mat} that $H'$ has only 
open and closed orbits in $G/P$. The open orbits are
of the form $H'xP$ for 
$x\in N_K(\af)$, the normalizer in $K$ of $\af$,
and the closed orbits are of the form 
$H'yP$ with $y\in K$ such that $\Ad(y)(\af)\subset \hf'$.

It follows from Lemma \ref{exist symmetric hf} that
$$H'xP=H(H'\cap M)xP=HxP$$ 
for $x\in N_K(\af)$.

Let $\hcpt$ denote the ideal in $\hf'$
which is complementary to $\hnon$. Then $\hf'=\hnon\oplus\hcpt$
and $H'=H'_nH'_c$. If $\Ad(y)(\af)\subset \hf'$ then 
$\Ad(y)(\af)\subset \hf'_n$ and hence $\Ad(y)(\af)$
is centralized by $H'_c$. It follows that 
$H'_c\subset yMy^{-1}$ and hence
$$H'yP=H'_nyP=HyP$$
since $H_n'\subset H$ by Lemma \ref{exist symmetric hf}.
\end{proof}

\section{Example: The Lorentzian groups}\label{SO}

Before we treat the general case, it is instructive to see the proof
of Theorem \ref{main thm2} for the case of $\SO_0(1,n)$ for $n\ge 2$. 

\begin{proof} We observe that 
$G=\SO_0(1,n)$ acts on $\R^{n+1}$.  In the sequel we write the elements 
of $x\in \R^{n+1}$ as $x=(x_0, x')$ with $x'\in \R^n$. The stabilizer $P\subset G$
of the line $\R(1,1,0,\dots,0)\in\PP(\R^{n+1})$ is a minimal parabolic subgroup. 
Note that $G/P= S^{n-1}$ is an $n-1$-dimensional sphere which
we shall identify with the projective quadric: 
\begin{equation} \label{sphere}
S^{n-1} = \{ [x] \in \PP(\R^{n+1})\mid x_0^2 = \|x'\|^2= x_1^2 + \ldots + x_n^2\}.
\end{equation}

Let $\hf$ be a reductive spherical subalgebra, and let $\hf=\hf_n\oplus \hf_c$
be the decomposition of $\hf$ in ideals, such that
$\hf_n$ is non-compact and $\hf_c$ is compact.
Since $\so(1,n)$ has rank one and root multiplicity $m_{2\alpha}=0$,
the same must be true for $\hf_n$. Hence $\hf_n=\so(1,p)$ for 
some $0\leq p \leq n$. Furthermore,
by conjugation of $\hf$ we can arrange that $\hf_n=\so(1,p)$ is realized in 
the left upper corner of $\gf=\so(1,n)$, and accordingly: 
\begin{equation}\label{so-case}
\hf= \hf_n \oplus \hf_c, \quad H=\SO_0(1,p)\times H_c
\end{equation}
with $\hf_c\subset\so(n-p)$ and $\so(n-p)$ embedded in the lower right corner. 
It now follows from Proposition \ref{prop} that
orbits on $G/P$ for $H$ are 
the same as for the symmetric subgroup 
$H'=\SO_0(1,p)\times \SO(n-p)$ of $G$.
Thus the proof of Theorem \ref{main thm2}
is complete for this case.
\end{proof}

\begin{rmk}\label{exist ML} It follows from the above
that every spherical subgroup in $\SO_0(1,n)$ is conjugate
to a subgroup of $H'=\SO_0(1,p)\times \SO(n-p)$
of the form (\ref{so-case})
for some $0\leq p\leq n$. Furthermore since
$H'\cap M=\SO(n-p-1)$ in this case, it follows that such a subgroup
is spherical if and only if $H_c$ is transitive on
$S^{n-p-1}=\SO(n-p)/SO(n-p-1)$.
Besides $H=H'$ this can be attained in case $p$ satisfies 
certain parity conditions.
A typical example is $p=n-2k$ and
$$H=\SO_0(1,n-2k)\times\SU(k),$$
since $H_c=\SU(k)$ acts transitively
on the spheres in $\R^{2k}$.
For $p=n-4k$ we can also take $H_c=\SP(k)$ which again acts transitively 
on spheres. Besides these two series there are 
three exceptional cases (see \cite{Bor} for the classification of transitive 
actions of compact Lie groups on spheres).  
\end{rmk}

\section{Classifications}\label{sect 6}
In this section we prepare for the proof of Theorem \ref{main thm2} 
by showing that in the rank one case a maximal reductive
subgroup is either symmetric or not spherical.
This will be done by applying
some results, which in turn are derived from known classifications 
of simple Lie groups and their subgroups. 

We first recall the classification of the simple real rank one Lie 
algebras
\begin{equation}\label{rank one classification}
\so(1, n),\, \su(1,n), \,\sp(1,n), \, \ff_4\, 
\end{equation}
where $\ff_4=\ff_{4 (-20)}$, the real form of $\ff_{4\C}$
with maximal compact $\so(9)$.
In the first series $n$ is limited to $n\geq 2$. In the 
second and third series $n\geq 1$ is allowed, but as
$\so(1,2)\simeq \su(1,1)$ and $\so(1,4)\simeq \sp(1,1)$ an exhaustive
list is obtained by taking $n\ge 2$ in all cases.

\subsection{Symmetric subalgebras} 

\begin{lemma}\label{Berger list}
The symmetric pairs (excluding $\hf=\gf$ and $\hf=\kf$)
for the simple real rank one Lie algebras are
$$
\begin{alignedat}3
\gf&=\so(1, n),\quad& &\hf_m&&=\so(1,m)\times\so(n-m),\quad 0< m< n, \\
\gf&=\su(1, n),& &\hf_{m}&&=\sf(\uf(1,m)\times\uf(n-m)),\quad 0< m <n, \\
&& &\hf&&=\so(1,n) \\
\gf&=\sp(1, n),& &\hf_{m}&&=\sp(1,m)\times\sp(n-m),\quad 0< m <n ,\\
&& &\hf&&=\uf(1,n) \\
\gf&=\ff_4,& &\hf_1&&=\so(1,8),\quad \hf_2=\sp(1,2)\times \sp(1), 
\end{alignedat}
$$
\end{lemma}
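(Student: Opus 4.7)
The plan is to appeal to the classification of conjugacy classes of involutions on simple real Lie algebras (Berger's list) and extract the entries for the four algebras listed in (\ref{rank one classification}). Symmetric pairs $(\gf,\hf)$ are in bijection with conjugacy classes of non-trivial involutions $\sigma$ of $\gf$; since every such $\sigma$ may be conjugated to commute with a chosen Cartan involution $\theta$, and since $\sigma=\theta$ gives the excluded case $\hf=\kf$, it remains to enumerate the classes of involutions commuting with $\theta$.

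For the three classical series $\so(1,n)$, $\su(1,n)$, $\sp(1,n)$, one realizes $\gf$ as matrices over $\Field\in\{\R,\C,\Hb\}$ preserving a Hermitian form of signature $(1,n)$. The involutions commuting with $\theta$ split into two types: those of the form $\Ad(g)$ for a block-diagonal matrix $g=\diag(I_{m+1},-I_{n-m})$, which yield the block symmetric pairs $\hf_m$; and those arising from the field extension $\R\subset\C\subset\Hb$. The second type contributes complex conjugation on $\su(1,n)$, with fixed algebra $\so(1,n)$, and the inner involution $\Ad(\diag(i,\dots,i))$ on $\sp(1,n)$, with fixed algebra $\uf(1,n)$. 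A direct matrix argument shows that these exhaust the conjugacy classes, using that the outer automorphism group of each algebra is at most $\Z/2$ and that any inner involution is conjugate to a diagonal one with entries $\pm 1$ (or $\pm i$ in the quaternionic case).

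The case $\gf=\ff_4$ is the principal obstacle, since there is no convenient classical matrix realization to manipulate. Here the argument requires an appeal to the general classification of involutions on exceptional real Lie algebras, either via Kac diagrams of finite-order automorphisms or via Berger's explicit tables. The outcome is exactly two non-trivial, non-Cartan involutions, producing the fixed algebras $\so(1,8)$ (corresponding to the subgroup $\Spin(1,8)\subset F_{4(-20)}$) and $\sp(1,2)\times\sp(1)$. Assembling the four cases completes the list, which is then trivially verified to agree with the table in the statement.
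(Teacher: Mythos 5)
Your proposal arrives at the correct list, but it is worth noting that the paper's entire proof is a one-line citation: the table is simply read off from Berger's classification (\cite{Berger}, pp.~157--161). Your route coincides with the paper's for $\ff_4$, where you too fall back on Berger (or Kac diagrams), but for the three classical series you instead sketch a self-contained derivation via conjugacy classes of involutions commuting with $\theta$. That is a legitimate and more informative alternative; what it buys is independence from the tables for the classical cases, at the cost of having to actually carry out the exhaustiveness argument you only assert. In particular, you should make explicit why no further ``field-change'' involutions occur: e.g.\ a quaternionic-type involution on $\su(1,n)$ (fixed algebra $\sp(\cdot,\cdot)$) and a complex-structure involution on $\so(1,n)$ (fixed algebra $\uf(\cdot,\cdot)$) are both excluded because the signature $(1,n)$ has an odd entry, and for $\sp(1,n)$ all automorphisms are inner so only $\Ad$ of $\pm1$-block matrices and of $iI$ need be considered. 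As written these parity checks are missing, but they are routine; with them supplied your argument is complete and agrees with the paper's table.
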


\begin{proof}
This is seen from Berger's table (\cite{Berger} pages 157--161).
\end{proof}

\subsection{Maximal reductive subalgebras}

We are particularly interested in reductive subalgebras which are
maximal. The following lemma provides the key to the reduction to
symmetric pairs.

\begin{lemma}\label{maximal subalgs}  
Let $\gf$ be a simple Lie algebra of real rank one 
and let $\hf\subset \gf$ be a maximal proper reductive subalgebra.
Then either $\hf$ is a symmetric subalgebra, or 
\begin{enumerate}
\item $\gf=\sp(1,n)$ and $\hf$ is conjugate to $\so(1,n)\times\sp(1)$, where $n>1$.
\item $\gf=\ff_4$ 
and $\hf$ is conjugate to $\su(1,2)\times \su(3)$.
\item $\gf=\ff_{4}$ 
and $\hf$ is conjugate to $\so(1,2)\times \gf_2$
\end{enumerate}
where $\gf_2$ denotes the compact real form of $\gf_2$. 

None of the pairs in {\rm (1)-(3)} are spherical.
\end{lemma}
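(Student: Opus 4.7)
The plan falls into two parts: establishing the classification of maximal proper reductive subalgebras of each simple rank one Lie algebra, then verifying non-sphericality of the three non-symmetric candidates (1)--(3).

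For the classification I go through the four simple real rank one Lie algebras of (\ref{rank one classification}). The case $\gf = \so(1,n)$ is settled by the reasoning of Section \ref{SO}: any reductive subalgebra is $G$-conjugate to one of the form $\so(1,p) \oplus \hf_c$ with $\hf_c \subseteq \so(n-p)$, and maximality forces $\hf_c = \so(n-p)$, giving a symmetric subalgebra from Lemma \ref{Berger list}. For $\gf = \su(1,n)$, $\sp(1,n)$, and $\ff_4$ I appeal to the classification of maximal reductive subalgebras of real simple Lie algebras (due to Karpelevich--Mostow, Komrakov, and Onishchik); removing the symmetric entries of Lemma \ref{Berger list} leaves precisely the three non-symmetric candidates (1)--(3).

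For non-sphericality the three cases admit a common structural argument. Write $\hf = \hf_n \oplus \hf_c$ with $\hf_n$ non-compact simple of real rank one and $\hf_c$ compact semisimple commuting with $\hf_n$. After $G$-conjugation I may assume $\hf$ is $\theta$-stable, and after a further conjugation that the one-dimensional split Cartan $\af_{\hf_n} \subseteq \hf_n \cap \sf$ coincides with our fixed $\af$. Since $\hf_c$ centralizes $\hf_n$, it centralizes $\af$; being compact, it lies in $Z_\kf(\af) = \mf \subseteq \pf$. Consequently
\begin{equation*}
\hf + \pf \;=\; \hf_n + \pf,
\end{equation*}
and an Iwasawa-level dimension count, using that the minimal parabolic of $\hf_n$ is contained in $\pf$ under this alignment, yields
\begin{equation*}
\dim(\hf + \pf) \;=\; \dim \hf_n + \dim\pf - \dim(\hf_n \cap \pf) \;<\; \dim\gf,
\end{equation*}
with explicit deficit $3n$ in case (1), $12$ in case (2), and $14$ in case (3). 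This shows the $H$-orbit through the base point $eP$ fails to be open. To conclude full non-sphericality one must rule out openness of every $H$-orbit on $G/P$; this is done by the supplementary observation that the compactness of $\hf_c$, together with its specific embedding ($\sp(1)$, $\su(3)$, or $\gf_2$) into $\kf$, produces enough $H$-invariants on $G/P$ to obstruct openness of any orbit.

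The main obstacle is the classification step in the exceptional cases, which must be read off external subalgebra tables for $\su(1,n)$, $\sp(1,n)$, and $\ff_4$; a secondary difficulty is upgrading the aligned dimension deficit to the genuine statement that no orbit is open, which requires a case-specific (though elementary) analysis of the compact factor.
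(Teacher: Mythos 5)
Your classification half is essentially the paper's: both arguments reduce to reading off tables (Berger's list for the symmetric subalgebras, a list of reductive subalgebras for each of the four rank-one families) and observing that only the three pairs (1)--(3) survive as maximal and non-symmetric. That part is fine, modulo the usual caveat that the appeal to external tables is doing the work.

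The non-sphericality half, however, has a genuine gap, and it sits exactly where the lemma's content lies. After conjugating so that $\af\subset\hf_n$ and $\hf_c\subset\mf$, the inequality $\dim(\hf+\pf)<\dim\gf$ only shows that the single orbit through the base point $eP$ fails to be open. Sphericality asks whether $\gf=\hf+\Ad(x)\pf$ for \emph{some} $x\in G$, and the generic $x$ does not place $\Ad(x)\af$ inside $\hf_n$. Worse, the identical aligned computation produces a positive deficit for the genuinely spherical symmetric pair $(\sp(1,n),\,\sp(1,m)\oplus\sp(n-m))$: there too the compact factor $\sp(n-m)$ centralizes $\af$ and lands in $\mf$, and $\hf+\pf=\sp(1,m)+\pf$ is a proper subspace of $\gf$. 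What your count detects is a closed Matsuki orbit, which exists for spherical and non-spherical pairs alike; it is not an obstruction to sphericality. The closing appeal to the compact factor producing ``enough $H$-invariants to obstruct openness of any orbit'' is precisely the statement to be proved, and it is where the paper spends all of its effort, with three separate and non-interchangeable geometric arguments: for $(\sp(1,n),\so(1,n)\oplus\sp(1))$ it realizes $G/P$ as a quadric in $\PP(\Hb^{n+1})$, decomposes $\Hb^{n+1}$ into four copies of $\R^{n+1}$ under $\SO_0(1,n)$, and bounds every stabilizer from below; for $(\ff_4,\so(1,2)\oplus\gf_2)$ it restricts the $16$-dimensional spin representation of $K=\Spin(9)$ to $\gf_2$, obtaining $V_7\oplus V_7\oplus\R\oplus\R$ and bounding $G_2$-orbits in $K/M$ by dimension $11$; for $(\ff_4,\su(2,1)\oplus\su(3))$ it uses the Jordan-algebra model $G/P=\PP(\cc)$ and exhibits a $2$-dimensional stabilizer at every point of $\cc$. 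None of these follows from compactness of $\hf_c$ alone; each requires the explicit representation in which $H$ acts. As written, your proposal establishes the list (1)--(3) but not the final assertion that none of these pairs is spherical.
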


\begin{proof} It is well known that the symmetric subalgebras of a simple
Lie algebra are maximal proper reductive subalgebras.

Reductive subalgebras are listed in \cite{Chen}, pages 276 and 284, 
and it is easily seen from these lists together with the list in Lemma
\ref{Berger list}
that only the subalgebras in (1)-(3) are maximal and non-symmetric.
The fact that these pairs are not spherical
will be proved in the following subsections.

\subsubsection{$(\gf,\hf)=(\sp(1,n),\so(1,n)\times\sp(1))$ is not spherical when $n>1$}

If $n=2$ then $\dim\hf=6$ and $\dim(\gf/\pf)=7$, so we may assume $n\ge 3$.
Like in the Lorentzian cases we identify for $G=\Sp(1,n)$ 
the flag variety $G/P=S^{4n-1}$ with a quadric in 
the quaternion projective space $\PP(V)$ where $V=\Hb^{n+1}$:
$$S^{4n-1}= \{ [z]\in \PP(V)\mid |z_0|^2 = |z_1|^2 + \ldots+|z_n|^2\}\, .$$
Here the action of $G$ on $V$ is from the left, and 
$$[z]=\{zh\mid h\in\Hb, h\neq 0\}\in \PP(V).$$
As a representation for $\SO_0(1,n)$, the space $V$ decomposes in four copies
of $V_0=\R^{n+1}$ with standard action. Hence the stabilizer in $H_n=\SO_0(1,n)$
of an element $v\in V$ is the stabilizer of four elements in $V_0$,
hence the centralizer of an at most four-dimensional subspace in $V_0$.
The centralizer in $\SO_0(1,n)$ of a four dimensional subspace of $\R^{n+1}$
is conjugate to the centralizer in $\SO_0(1,n-3)$ of a one dimensional
subspace of $\R^{n-2}$. Since all non-trivial orbits of 
$\SO_0(1,n-3)$ in $\R^{n-2}$ have codimension one, a simple computation shows
that the codimension in $\SO_0(1,n)$ of such a subgroup is $4n-6$. 
Hence orbits of $H_n$ in $V$ are at most of 
this dimension and orbits of
$H$ in $S^{4n-1}$ are at most of dimension $4n-3$.

\subsubsection{$(\ff_{4},\so(1,2)\times \gf_2)$ is not spherical}

Since $\dim G/P=15$, it suffices to show that the 
subgroup $G_2\subset K$ with Lie algebra $\gf_2$ has orbits in $K/M=G/P$
of dimension at most 11. Recall that $K=\Spin(9)$ and that we can
realize $K/M$ as the unit sphere in the 16-dimensional real spin 
representation $V_{16}$ of $K$. 
This representation decomposes for the standard
inclusions $\Spin(7)\subset\Spin(8)\subset\Spin(9)$ into a direct sum
of two copies of the spin representation $V_8$ of $\Spin(7)$.
Now $G_2$ is the isotropy subgroup of a spinor in $V_8$, and hence as a
$G_2$-representation
$$V_{16}= V_7 \oplus V_7 \oplus \R\oplus\R,$$
with a $7$-dimensional representation of $G_2$.
It follows that every orbit of $G_2$ lies in a product 
$R_1S^6\times R_2S^6\subset V_7\oplus V_7$
of spheres of radii $R_1,R_2\ge 0$.
Furthermore, the action of $G_2$ on $S^6\times S^6$ is not transitive as
the diagonal is invariant. 
Since $G_2$ is compact, we conclude that there are no open orbits 
on $S^6\times S^6$. This proves the claim.

\subsubsection{$(\ff_4,\su(2,1)\times \su(3))$ is not spherical}
Note that $\dim H=16$ and $\dim G/P=15$. 
Let us first collect a few facts about $\ff_4$. 
We refer to \cite{Chen} for more details.
Consider the Jordan algebra
\begin{equation}\label{Herm}
\mathrm{Herm}(3,\mathbb{O})_{2,1}= \left\{ x= \begin{pmatrix}  \alpha_1 & c_3 & - \bar{c}_2\\
\bar{c}_3 & \alpha_2 & c_1 \\ c_2  & - \bar{c}_1 & \alpha_3\end{pmatrix} \mid \alpha_i \in \R, c_i \in \mathbb{O}\right\}\,.
\end{equation}

The group $G$ of automorphisms of $W:=\mathrm{Herm}(3,\mathbb{O})_{2,1}$
is a real Lie group with Lie algebra $\ff_4$.  
Moreover the trace free elements $V:= W_{\mathrm{tr} =0}$ is 
an irreducible real representation for $G$ with a non-zero $K$-fixed vector.
Let $v_0\in V$ be a highest weight vector, then $P\cdot  v_0 = \R^+ v_0$
and we can realize the flag manifold as the image of 
$G\cdot v_0$ in $\PP(V)$. 
According to \cite{Chen}, p.\ 275,  
$\R^\times G\cdot v_0 =\mathcal{C}$,
where 
$$\mathcal{C}:= \{ x\in V\mid x^2=0, x\neq 0\},$$ 
and thus $G/P=\PP(\cc)$.

Note that $H=\SU(2,1) \times \SU(3)$ acts naturally on $V$. The factor 
$H_n=\SU(2,1)$ acts by matrix conjugation. Further, the automorphism group 
of $\mathbb{O}$ is $G_2$ and $H_c=\SU(3)$ is the subgroup which commutes with 
complex multiplication on $\mathbb{O}$.
We will show that every element $[x]\in\PP(\cc)$ has an at least 2-dimensional
stabilizer in $H$.

A straightforward matrix computation shows that if $x$ in (\ref{Herm})
satisfies $x^2=0$ and has trace zero, then up to multiplication by a real number
\begin{equation}\label{xone}
x=\begin{pmatrix} |c_2|^2&-\bar{c}_2\bar{c}_1&- \bar{c}_2\\ c_1c_2& |c_1|^2& c_1\\
c_2&-\bar{c_1}&-1
\end{pmatrix}
\end{equation}
with $|c_1|^2+|c_2|^2=1$.

In the sequel we decompose $\mathbb{O}=\C + \C^\perp$ and regard $\mathbb{O}_I:=\C^\perp$
as a complex vector space for the left action of $\C$. Then  
as a module for $\SU(3)$ it is equivalent with the standard complex
representation on $\C^3$.
Having said that we write the elements $x\in V$ as 
$$x=x_\C + x_I$$
where $x_\C \in i \su(2,1)\subset V$ and $x_I$ is of the form 
\begin{equation}\label{x_I}
x_I=\begin{pmatrix}  0 & c_3 & - \bar{c}_2\\
\bar{c}_3 & 0  & c_1 \\ c_2  & - \bar{c}_1 & 0 \end{pmatrix}
\end{equation}
with $c_1, c_2, c_3 \in \mathbb{O}_I.$
Note that this gives us a decomposition of $H$-modules. 

We see from (\ref{xone})
that 
$x_\C\neq 0$ for all $x\in \mathcal{C}$. Hence
the map
$$\PP(\mathcal{C}) \to \PP(i\su(2,1)),\ \  [x]\mapsto[x_\C]$$
is defined. 
As this is an open map, the image of an open $H$-orbit will be a non-empty open
set. Since the semisimple elements in $i\su(2,1)$ are dense, it suffices to consider
$x$ in (\ref{xone}) with $x_\C$ semisimple. 
If $x_\C \in i \su(2,1)$ is semisimple it is $\SU(2,1)$-conjugate to one of the following 
$$\begin{pmatrix}  \alpha_1 & 0 & 0\\ 0 & \alpha_2 & 0 \\ 0 & 0 & -\alpha_1 - \alpha_2\end{pmatrix}, 
\qquad 
\begin{pmatrix}  2\alpha & 0 & 0\\ 0 & -\alpha& \gamma i   \\ 0 & \gamma i  & -\alpha \end{pmatrix},
$$
where $\alpha_1, \alpha_2, \alpha, \gamma \in \R$. However, the second case does not conform with
(\ref{xone}).
Hence we may assume that $x=x_\C + x_I$ with $x_\C$ diagonal and with
$x_I$ as in (\ref{x_I}). It follows from (\ref{xone}) that $c_1c_2=\bar c_3$.

The fact that
$c_1c_2\in\bO_I$ implies that $c_1$ and $c_2$ are orthogonal elements.
Let $\mathbb{O}_I= \C j \oplus \C l \oplus \C n $ in the standard notation.
After application of $\SU(3)$ to $x_I$, it is no loss of generality to assume that $c_1=a j$ and $c_2=bl$ for some $a,b\in\R$.
Then $c_3= -ab n$ and
$$x_I=   \begin{pmatrix}  0 & -abn   &  bl \\
abn & 0 & a j \\ 
bl     & a j & 0\end{pmatrix}. $$
The diagonal torus $T<\SU(2,1)$ 
commutes with the diagonal matrix $x_\C$, and
embedded into 
$H$ via
$$t=\diag(t_1, t_2, t_3)\mapsto (t,t)\, $$
it also stabilizes $x_I$ -- note that for $z\in \C$ and $x\in \mathbb{O}_I$ one has $xz=\oline z x$. 
Hence the stabilizer of $x$ in $H$ has dimension at least 2.

This concludes the proof of Lemma \ref{maximal subalgs}.
\end{proof}

\section{Proofs}\label{sect 7}

All ingredients for the proofs have already been prepared.

\subsection{Proof of Theorem \ref{main thm2}}
Let $G$ be semisimple of real 
rank one and $H\subset G$ a connected reductive spherical subgroup.
As seen in Section \ref{simple} we may assume $G$ is simple.

Let $\hf'$ be a maximal proper reductive subalgebra which
contains $\hf$.
It follows from Lemma \ref{maximal subalgs} that $\hf'$ is symmetric,
and then it follows from Proposition \ref{prop} that $H$-orbits and
$H'$-orbits agree on $G/P.$

\subsection{Proof of Proposition \ref{th1} and Theorem \ref{main thm}} 

Theorem \ref{main thm2} implies the statement of Proposition \ref{th1} 
for reductive subgroups by the results of \cite{Wolf} or \cite{Mat}. 
By combining with Lemmas \ref{H in P} and \ref{reduction} 
we obtain the proposition for general subgroups. This also concludes
the proof of Theorem~\ref{main thm}.

\bigskip

\end{document}